\documentclass[12pt]{amsart}
\usepackage[english]{babel}
\usepackage[T1]{fontenc}
\usepackage{microtype}
\usepackage{amsmath,amstext,amsfonts,amssymb,amsthm}
\usepackage{mathtools}
\usepackage{dirtytalk}
\usepackage{enumitem}
\usepackage{needspace}
\usepackage[colorlinks=true,allcolors=blue]{hyperref}
\usepackage{theoremref}
\allowdisplaybreaks

\newtheorem{theorem}{Theorem}[section]
\newtheorem{lemma}[theorem]{Lemma}
\newtheorem{prop}[theorem]{Proposition}

\theoremstyle{definition}

\theoremstyle{remark}

\newtheorem{thmx}{\textbf{Theorem}}

\newcommand{\cA}{\mathcal{A}}
\newcommand{\cC}{\mathcal{C}}

\newcommand{\cK}{\mathcal{K}}

\newcommand{\cU}{\mathcal{U}}

\newcommand{\bR}{\mathbb{R}}

\newcommand{\Sub}{\operatorname{Sub}}

\newcommand{\CC}{\mathcal{CC}}
\newcommand{\G}{\Gamma}

\DeclareMathOperator{\prob}{Prob}
\DeclareMathOperator{\Conv}{Conv}
\DeclareMathOperator{\Max}{Max}
\DeclareMathOperator{\Min}{Min}
\title[IRSs of \(C^*\)-simple groups]{Invariant random subgroups of \(C^*\)-simple groups}
\date{}
\author[Amrutam]{Tattwamasi Amrutam}
\address{Institute of Mathematics of the Polish Academy of Sciences, ul. Sniadeckich 8, 00-656, Warszawa, Poland}
\email{tattwamasiamrutam@gmail.com}
\thanks{T.A. is supported by National Science Centre, Poland Sonata, grant number 2025/59/D/ST1/03117.}
\author[Kalantar]{Mehrdad Kalantar}
\address{Mehrdad Kalantar\\
University of Oxford, UK}
\email{mehrdad.kalantar@maths.ox.ac.uk}
\begin{document}
\raggedbottom
\begin{abstract}
Let $\Gamma$ be a countable discrete $C^*$-simple group. Let $\mu$ be a
$\Gamma$-invariant Borel probability measure on $\text{Sub}(\Gamma)$,
i.e., an IRS. We show that $\mu$-almost every subgroup $H$ is
$C^*$-simple.
\end{abstract}
\maketitle
\section{Introduction}
A countable discrete group $\Gamma$ is called $C^*$-simple if its
reduced group $C^*$-algebra $C_r^*(\Gamma)$ is simple. It has the
\textit{unique trace property} (UTP) if the canonical trace is the only
tracial state on $C_r^*(\Gamma)$. Recall that the amenable
radical, denoted by $\operatorname{Rad}(\Gamma)$, is the largest
amenable normal subgroup of $\Gamma$. Breuillard, Kalantar, Kennedy
and Ozawa \cite[Theorem~1.3]{BKKO} proved that triviality of the amenable radical is equivalent to the unique trace property.

Since the amenable radical is a characteristic subgroup, if $\G$ has the UTP, then every normal subgroup of $\G$ also has the UTP.
The similar result for $C^*$-simplicity was proved in \cite{BKKO}, answering a question of de la Harpe and Pr{\'e}aux~\cite{de2011c}.

This paper concerns the extensions of these inheritance questions.

Denoting by ${\rm Sub}(\G)$ the Chabauty space of subgroups of $\G$, equipped with the continuous $\G$-action by conjugations,
a minimal component of ${\rm Sub}(\G)$ is called a \emph{uniformly recurrent subgroup} (URS), and a $\G$-invariant probability measure $\mu$ on ${\rm Sub}(\G)$ is called an \emph{invariant random subgroup} (IRS) on $\G$ \cite{AGV}.

The study of URS’s and IRS’s is a fast-developing topic in dynamics and ergodic theory. These are important generalizations of normal subgroups: while normal subgroups are kernels of group actions, these generalizations correspond to the point stabilizers of topological and measurable group actions.

It is now well-understood that (amenable) IRSs and URSs of $\G$ are closely connected to the UTP and $C^*$-simplicity of $\G$. More precisely, $\G$ has UTP if and only if it has no non-trivial amenable IRS \cite{BDL, BKKO}, and $\G$ is $C^*$-simple if and only if it has no non-trivial amenable URS \cite{Kennedy}.

It is therefore natural to ask whether the inheritance of
$C^*$-simplicity and the unique trace property extends from normal
subgroups to IRSs and URSs. For IRSs, the unique trace property does pass to almost every
subgroup. This follows from \cite[Corollary~1.5]{BDL} and \cite{BKKO}.

The corresponding assertion for URSs fails. Indeed, Le Boudec \cite{LB}
constructed groups with trivial amenable radical which are not
$C^*$-simple. Such a group has the unique trace property by
\cite[Theorem~1.3]{BKKO}, but it has a nontrivial amenable URS by \cite[Theorem~1.2]{Kennedy}. Every member of this URS is nontrivial
and amenable, and therefore fails the unique trace property.
Obviously, UTP does not imply any $C^*$-simplicity for URSs or IRSs.
Recently, Bray and Kennedy \cite[Corollary~3.3]{BK2026} (also see \cite{HK2026}) proved
that every URS of a countable $C^*$-simple group contains a
$C^*$-simple subgroup (in fact, they proved a generic member of the URS is $C^*$-simple).

In this paper, we complete this picture by proving the last remaining piece. We show that $C^*$-simplicity passes to almost every subgroup of
every IRS. This extends the normal subgroup result to invariant
random subgroups.
\begin{thmx}\label{thm:maintheorem}
Let $\Gamma$ be a countable discrete $C^*$-simple group and let $\mu$ be
an invariant random subgroup of $\Gamma$. Then $H$ is $C^*$-simple for
$\mu$-almost every $H\in\Sub(\Gamma)$.
\end{thmx}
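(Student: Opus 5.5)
The plan is to reduce the statement to Kennedy's characterization \cite{Kennedy}, applied to the random subgroup $H$ itself: $H$ is $C^*$-simple if and only if $H$ has no nontrivial amenable URS. An equivalent formulation uses the canonical amenable URS $\mathcal{A}_H\subseteq \Sub(H)$, the stabilizer URS of the $H$-action on its Furstenberg boundary $\partial_F H$. Then $H$ is $C^*$-simple exactly when $\mathcal{A}_H=\{\{e\}\}$. The key observation is that $H\mapsto \mathcal{A}_H$ is canonical, hence $\Gamma$-equivariant: $\mathcal{A}_{gHg^{-1}}=g\mathcal{A}_Hg^{-1}$. Suppose the theorem fails. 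Then $E=\{H: \mathcal{A}_H\neq\{\{e\}\}\}$ is a conjugation-invariant set of positive $\mu$-measure. After checking that $E$ is Borel, I would replace $\mu$ by $\mu(\cdot\mid E)$, or by an ergodic component, and assume $\mu(E)=1$.

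Next I would push $\mu$ forward under the Borel equivariant map $H\mapsto \mathcal{A}_H$. This map goes into the hyperspace of closed subsets of $\Sub(\Gamma)$, with $\Sub(H)\subseteq\Sub(\Gamma)$. Its image is a $\Gamma$-invariant random closed set $\mathcal{A}$, and every element of every such set is an amenable subgroup of $\Gamma$. Measurability of $H\mapsto\mathcal{A}_H$ is the first technical point. I would handle it with Kennedy's intrinsic description: $K\in\mathcal{A}_H$ if and only if $K$ lies in a minimal $H$-invariant closed subset of amenable subgroups of $H$ that is maximal for a suitable order. An alternative is to work only with the Borel condition \say{$H$ has an amenable recurrent subgroup}. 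That condition states that there are an amenable $K\le H$ and a finite $F\subseteq H\setminus\{e\}$ with $hKh^{-1}\cap F\neq\emptyset$ for all $h\in H$.

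The goal is then to produce from $\mathcal{A}$ a nontrivial amenable recurrent subgroup of $\Gamma$ itself. Such a subgroup contradicts $C^*$-simplicity of $\Gamma$ by \cite{Kennedy}. By countability, I would fix a finite $F\subseteq\Gamma\setminus\{e\}$ and a positive-measure set $B$ of $H$ on which a chosen $K\in\mathcal{A}_H$ (selected measurably) is witnessed as recurrent by this $F$. The difficulty is that recurrence in $H$ only controls conjugation by elements of $H$, not of $\Gamma$. To pass from $H$ to $\Gamma$, I would exploit invariance of $\mu$ through Poincaré recurrence. For every $g\in\Gamma$, $\mu(B\cap g^{-1}B)$ is controlled, and for $H\in B\cap g^{-1}B$ the conjugate $gKg^{-1}$ lies in $\mathcal{A}_{gHg^{-1}}$, again recurrent with witness $F$. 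I would then try to show that the $\Gamma$-orbit closure of the random amenable subgroup avoids $\{e\}$. A fallback is to form the union of the supports, giving a closed $\Gamma$-invariant set of amenable subgroups, and extract a minimal subset. If I can show $\{e\}$ does not lie in it, that minimal subset is a nontrivial amenable URS of $\Gamma$, which again contradicts \cite{Kennedy}.

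The main obstacle is this last step: converting an invariant measure on random amenable $H$-URSs into a topological (URS) statement about $\Gamma$. The measure-to-topology mismatch is exactly what separates this result from the UTP case. There, an equivariant choice of amenable subgroup would give an amenable IRS and one could just cite \cite{BDL, BKKO}, but here only the closed set $\mathcal{A}_H$, not a point of it, is canonical. If the Poincaré recurrence argument does not directly keep $\{e\}$ out of the orbit closure, I would try the boundary picture instead. $H$ acts on $\partial_F\Gamma$, and there is an $H$-equivariant map $\partial_F H\to\prob(\partial_F\Gamma)$, so each $K\in\mathcal{A}_H$ fixes a measure $\nu$ on the free $\Gamma$-boundary. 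I would then use $\Gamma$-invariance of $\mu$ to show that $\nu$ may be taken to be a point mass for $\mu$-a.e. $H$, which forces $K=\{e\}$.
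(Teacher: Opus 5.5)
There is a genuine gap: the decisive step is left open. Your reduction to showing $\mathcal{A}_H=\{\{e\}\}$ for $\mu$-a.e.\ $H$ is sound, as are the equivariance $\mathcal{A}_{gHg^{-1}}=g\mathcal{A}_Hg^{-1}$ and the order-theoretic route to measurability of $H\mapsto\mathcal{A}_H$. Your main line, however, cannot work as described. Recurrence of $K\in\mathcal{A}_H$ inside $H$ only controls conjugation by elements of $H$. A finite witness set for $\mathcal{A}_{gHg^{-1}}$ is $gF_Hg^{-1}$, which is not uniformly bounded in $g$. To get a recurrent subgroup of $\Gamma$ you would need one $F$ that works for $gHg^{-1}$ for \emph{every} $g\in\Gamma$. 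Poincar\'e recurrence only gives returns along powers of individual elements, and $\Gamma$ is non-amenable, so no averaging argument gives control over all of $\Gamma$.

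The union-of-supports fallback is not a reduction at all. Any nonempty closed $\Gamma$-invariant subset of $\Sub^{\mathrm{am}}(\Gamma)$ contains an amenable URS of $\Gamma$. By Kennedy's theorem that URS is $\{\{e\}\}$, because $\Gamma$ is $C^*$-simple. So $\{e\}$ lies in that set unconditionally. ``Showing $\{e\}$ does not lie in it'' is therefore just a restatement of the contradiction you want, and $\mu$ plays no role in it. Passing to a purely topological object throws away exactly the information the hypothesis provides.

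Your last idea, using a free $\Gamma$-boundary, points in the right direction. But ``use $\Gamma$-invariance of $\mu$ to make $\nu$ a point mass'' is the whole theorem, and it cannot be done with one chosen $K$ or $\nu$, since neither is canonical. The missing idea is to work with the canonical \emph{set} of all such measures and apply a measure-rigidity result to it.

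Fix a free compact metrizable $\Gamma$-boundary $X$, which exists as a factor of $\partial_F\Gamma$. Set
$\Phi(H)=\overline{\Conv}^{\,w^*}\bigl(\bigcup_{K\in\mathcal{A}_H}\prob_K(X)\bigr)$.
This is a $\Gamma$-equivariant Borel map into the space of nonempty compact convex subsets of $\prob(X)$, so $\Phi_*\mu$ is a $\Gamma$-invariant probability measure on that space. Since $X$ is a boundary, $\prob(X)$ is an irreducible affine $\Gamma$-space, and \cite[Lemma~2.3]{BDL} forces $\Phi_*\mu=\delta_{\prob(X)}$. By Milman's converse to the Krein--Milman theorem, every extreme point $\delta_x$ then lies in $\bigcup_{K\in\mathcal{A}_H}\prob_K(X)$. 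So some $K\in\mathcal{A}_H$ fixes $x$, hence $K\le\Gamma_x=\{e\}$, and minimality of $\mathcal{A}_H$ gives $\mathcal{A}_H=\{\{e\}\}$. Without this rigidity of invariant random convex subsets of $\prob(X)$, or a genuine substitute, your argument does not close.
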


Our proof also establishes a measurability result for Furstenberg URSs. For a
subgroup $H\leq\Gamma$, let $\cA_H$ denote the Furstenberg URS of
$H$, viewed as a compact family of amenable subgroups of $\Gamma$ (see~Section~\ref{Sec:preliminaries}).
Writing $\Sub^{\mathrm{am}}(\Gamma)$ for the space of amenable
subgroups of $\Gamma$ and $\cK(\Sub^{\mathrm{am}}(\Gamma))$ for
its compact subsets, equipped with the Vietoris topology, we show that the map $H\mapsto\cA_H$ from $\Sub(\Gamma)$ to
$\cK(\Sub^{\mathrm{am}}(\Gamma))$ is Borel measurable (see
Proposition~\ref{prop:AH-borel}).
\section{Preliminaries}\label{Sec:preliminaries}
In this section, we briefly recall the notions of Furstenberg boundaries, uniformly recurrent subgroups (URSs) and the associated Furstenberg URS. We also prove measurability of certain naturally associated maps. Throughout, $\Gamma$ is a countable discrete group.
\subsection{Invariant random subgroups}
We view $\Sub(\Gamma)$ as a closed subset of $\{0,1\}^{\Gamma}$ by
identifying a subgroup with its characteristic function. The resulting
topology is called the Chabauty topology. Equipped with this topology, $H_n\to H$ if and only if, for every $g\in\Gamma$,
$\chi_{H_n}(g)\to \chi_H(g)$. Moreover, equipped with this topology, $\Sub(\Gamma)$ is compact and
metrizable, and the conjugation action of $\Gamma$ on $\Sub(\Gamma)$ is
continuous.

A Borel probability measure $\mu$ on $\Sub(\Gamma)$ is called an
\textit{invariant random subgroup}, or an \textit{IRS}, if it is invariant
under the conjugation action of $\Gamma$. The notion of IRS was introduced by Ab\'ert, Glasner and Vir\'ag~\cite{AGV} and since then, it has opened up a great many important directions of research.

We write $\Sub^{\mathrm{am}}(\Gamma)$ for the set of amenable subgroups of
$\Gamma$. It is well known that this set is closed in $\Sub(\Gamma)$ (see, for example, \cite[Propositions~4.1 and~4.7]{AHO}).
\subsection{Topology on closed subsets of a compact space}
For a compact metrizable
space $Z$ we denote by $\cK(Z)$ the space of compact subsets of
$Z$, equipped with the topology generated by the sets $\{K\in \cK(Z):K\cap C=\varnothing\}$ and $\{K\in \cK(Z):K\cap V\ne\varnothing\}$ with $C$ varying over all closed subsets of $Z$, and $V$ varying over all open subsets of $Z$. The space $\cK(Z)$ is again a compact
metrizable space (see e.g. \cite[Section 4.F]{Kechris} for more details).

Since $Z$ is compact and metrizable, every open $V\subseteq Z$ can be written as $V=\cup_n C_n$, where $C_n$ is compact for all $n$, and therefore $\{K\in \cK(Z):K\cap V\ne\varnothing\} = \bigcup_n \{K\in \cK(Z):K\cap C_n=\varnothing\}^c$.
{In particular, a map from a measurable space into $\cK(Z)$ is measurable if and only if the inverse image of $\{K\in \cK(Z):K\cap C\ne\varnothing\}$ is measurable for every closed set $C\subseteq Z$.}

Any action of a group $\Gamma$ on $Z$ by homeomorphisms induces a canonical action of $\Gamma$ on $\cK(Z)$ by homeomorphisms. {In this case, we denote by $\cK^\G(Z)$ the collection of nonempty $\G$-invariant closed subsets of $Z$. This is a closed subset of $\cK(Z)$, since the fixed point set is closed and the empty set is an isolated point of $\cK(Z)$.}

\subsection{Furstenberg boundaries}
\label{subsec:boundaries}
Let $X$ be a compact Hausdorff space on which $\Gamma$ acts by
homeomorphisms. We denote by $\prob(X)$ the space of all {regular Borel probability measures} on $X$, and equip it with the weak$^*$ topology; this turns $\prob(X)$ into a compact convex space on which $\Gamma$ acts by affine homeomorphisms.

A $\Gamma$-space $X$ is called a \textit{$\Gamma$-boundary} if, for every
$\nu\in\prob(X)$ and every $x\in X$, there is a net $(g_i)$ in $\Gamma$
such that $g_i\nu\to\delta_x$ in the weak$^*$ topology.

Furstenberg~\cite{Furstenberg1973,Furstenberg} showed that for every group $\Gamma$, there is a universal $\Gamma$-boundary $\partial_F\Gamma$ such that every
$\Gamma$-boundary is a continuous $\Gamma$-equivariant image of
$\partial_F\Gamma$. This property determines $\partial_F\Gamma$
up to a $\Gamma$-equivariant homeomorphism. We call $\partial_F\Gamma$ the \textit{Furstenberg boundary} of $\Gamma$.
By a fundamental result of Kalantar-Kennedy~\cite{KK}, $C^*$-simplicity of $\Gamma$ can be characterized via its action $\Gamma\curvearrowright\partial_F\Gamma$. To be precise, $\Gamma$ is $C^*$-simple if and only if its action on $\partial_F\Gamma$ is free.

We will use the following elementary fact that allows us to assume that we can always find a metrizable free $\Gamma$-boundary for a $C^*$-simple group $\Gamma$.
\begin{lemma}
\label{lem:freebndry}
Let $Y$ be a free $\Gamma$-boundary. There exists a free compact
metrizable boundary as a factor of $Y$.
\end{lemma}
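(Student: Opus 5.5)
Since $\Gamma$ is countable, the plan is to build a separable $\Gamma$-invariant unital C$^*$-subalgebra $A\subseteq C(Y)$. Its Gelfand spectrum $X$ is then a compact metrizable space, and the inclusion $A\subseteq C(Y)$ dualizes to a continuous $\Gamma$-equivariant surjection $\pi\colon Y\to X$. Any factor of a $\Gamma$-boundary is again a $\Gamma$-boundary: given $\nu\in\prob(X)$, lift it to some $\tilde\nu\in\prob(Y)$ via Hahn--Banach or Riesz, so that $\pi_*\tilde\nu=\nu$. Given $x=\pi(y)$, choose a net with $g_i\tilde\nu\to\delta_y$ and push it forward to get $g_i\nu\to\delta_x$. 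So the only task is to choose $A$ so that the action on $X$ stays free.

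To arrange freeness, fix a nontrivial $g\in\Gamma$. For each $y\in Y$ we have $gy\neq y$, so by Urysohn there is $f_y\in C(Y)$ with $f_y(gy)\neq f_y(y)$. The set $U_y=\{z: f_y(gz)\neq f_y(z)\}$ is then an open neighborhood of $y$. By compactness of $Y$, finitely many $U_{y_1},\dots,U_{y_n}$ cover $Y$. Let $F_g=\{f_{y_1},\dots,f_{y_n}\}$. Now take $A$ to be the unital C$^*$-algebra generated by $\{h\cdot f: h\in\Gamma,\ f\in F_g,\ g\in\Gamma\setminus\{e\}\}$. This is countably generated, hence separable, and it is $\Gamma$-invariant by construction.

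It remains to check that the action on $X$ is free. Suppose $gx=x$ for some $g\ne e$ and $x\in X$. Pick $y\in\pi^{-1}(x)$; then $\pi(gy)=g\pi(y)=x=\pi(y)$. Every $a\in A$ factors through $\pi$, so $a(gy)=a(y)$ for all $a\in A$. But $y\in U_{y_i}$ for some $i$, which means $f_{y_i}(gy)\ne f_{y_i}(y)$ with $f_{y_i}\in A$, a contradiction.

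The only subtle step is ensuring that a single countable family separates $z$ from $gz$ for all $z$ simultaneously. The compactness argument above handles this with finitely many functions per $g$, and there are countably many $g$, so no real obstacle is expected.
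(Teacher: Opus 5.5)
Your proposal is correct and follows essentially the same route as the paper's proof. In both, compactness gives a finite separating family $F_s$ for each $s\neq e$, one takes the separable unital $\Gamma$-invariant C$^*$-algebra generated by their translates, and freeness on the spectrum follows by pulling a fixed point back to $Y$. The only addition on your side is that you spell out, by lifting measures through $\pi_*$, why the boundary property passes to factors, which the paper simply asserts.
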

\begin{proof}
For each $s\ne e$, the open
sets $\{y\in Y:f(y)\ne f(sy)\}$, with $f\in C(Y)$, cover $Y$.
By compactness, choose a finite family $F_s\subseteq C(Y)$ giving
such a cover. Let $A\subseteq C(Y)$ be the unital $C^*$-algebra
generated by all $\Gamma$-translates of the functions in
$\bigcup_{s\ne e}F_s$. Since $\Gamma$ is countable, $A$ is separable.
Its spectrum $X$ is compact metrizable, and the inclusion
$A\subseteq C(Y)$ induces a continuous equivariant surjection
$p:Y\to X$. If $sx=x$ for some $s\ne e$, choose $y\in p^{-1}(x)$.
Then $p(sy)=p(y)$, so $f(sy)=f(y)$ for every $f\in A$, contradicting
the choice of $F_s$. Thus the action on $X$ is free. Finally, since boundary actions pass to factors, we see that $X$ is a $\Gamma$-boundary.
\end{proof}
\subsection{The Furstenberg URS}
The notion of uniformly recurrent subgroups was introduced by Glasner-Weiss~\cite{glasner2015uniformly} and is intimately connected to the structure of $C_r^*(\Gamma)$ by results of Kennedy~\cite{Kennedy} and Le Boudec-Matte Bon~\cite{LBMB}.
A \textit{uniformly recurrent subgroup}, or a \textit{URS}, of a group
$\Gamma$ is a nonempty closed $\Gamma$-invariant subset of $\Sub(\Gamma)$ which is
minimal for these properties. A URS is called \textit{amenable} if all its members are amenable.

We write $\cA_\Gamma:=\{\Gamma_x : x\in \partial_F\Gamma\}$ for the \emph{Furstenberg URS of $\Gamma$}. By \cite[Theorem~2.16]{LBMB}, for any amenable URS $\cU$ of $\Gamma$, and for every $K\in \cA_\Gamma$ there exists $J\in\cU$ such that $J\le K$. If $E$ is a nonempty closed $\Gamma$-invariant subset of $\Sub^{\mathrm{am}}(\Gamma)$, then it contains an amenable URS by Zorn's lemma, and so for every $K\in \cA_\Gamma$ there exists $J\in E$ such that $J\le K$.

Given $H\le\Gamma$, we regard $\Sub(H)$ as a closed subspace of
$\Sub(\Gamma)$. In particular, $\cA_H$ is a closed subset of $\Sub^{\rm am}(\Gamma)$, and it follows by its universal property that $\cA_{gHg^{-1}}=g\cA_Hg^{-1}$ for $H\le\Gamma$ and $ g\in\Gamma$.

Following \cite{BDL}, for a compact metrizable space $X$, we denote by $\CC(\prob(X))$ the
space of nonempty compact convex subsets of $\prob(X)$, and equip it with the topology defined in \cite[Section~2]{BDL}. This topology is the coarsest topology making the functionals $h_f: \CC(\prob(X))\to \bR$, $\cC\mapsto \max_{\nu\in\cC}\int_Xfd\nu$,
continuous for every $f\in C(X,\bR)$ with $\|f\|_{\infty}\le 1$.
By \cite[Lemma~2.1 and its proof]{BDL}, this space is compact and metrizable.
Indeed, since the above functionals $h_f$ separate compact convex sets by the Hahn--Banach separation theorem, it follows that this topology coincides with the topology of $\CC(\prob(X))$ inherited from $\cK(\prob(X))$.

\Needspace{7\baselineskip}
\begin{lemma}
\label{lem:convmeasurable}
Let $X$ be a compact metric space. Then, the map $${\rm Conv}:\cK(\prob(X))\setminus\{\varnothing\} \to \CC(\prob(X)), ~A\mapsto \overline{{\rm Conv}}^{w^*}(A)$$
is continuous. If $X$ is a $\Gamma$-space, then the map ${\rm Conv}$ is moreover $\G$-equivariant.
\end{lemma}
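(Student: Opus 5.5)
The argument has two parts. Continuity is a sequential estimate that uses the functionals $h_f$. Equivariance follows because each $g\in\Gamma$ acts on $\prob(X)$ by an affine homeomorphism.

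\emph{Continuity.} Since $X$ is compact metrizable, $\prob(X)$ is compact metrizable, and so is $\cK(\prob(X))$. It therefore suffices to check sequential continuity. By the discussion preceding the lemma, the topology on $\CC(\prob(X))$ is the coarsest one making every $h_f$ continuous. So it is enough to show that for each $f\in C(X,\bR)$ with $\|f\|_\infty\le 1$, the map
\[
A\mapsto h_f(\Conv(A))=\max_{\nu\in\overline{\Conv}^{w^*}(A)}\int_X f\,d\nu
\]
is continuous on $\cK(\prob(X))\setminus\{\varnothing\}$.

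The key observation is that $\nu\mapsto\int f\,d\nu$ is affine and weak$^*$ continuous. Hence its supremum over the closed convex hull equals its supremum over $A$, which gives
\[
h_f(\Conv(A))=\max_{\nu\in A}\int_X f\,d\nu .
\]
Now write $\phi_f(\nu)=\int f\,d\nu$, a continuous function on the compact space $\prob(X)$. It remains to see that $A\mapsto\max_{A}\phi_f$ is continuous for the Vietoris topology on nonempty compact sets, which is a standard fact. For $t\in\bR$, the set $\{A:\max_A\phi_f>t\}$ equals $\{A: A\cap\phi_f^{-1}(t,\infty)\neq\varnothing\}$, which is Vietoris open. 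Likewise, $\{A:\max_A\phi_f<t\}$ equals $\{A: A\cap\phi_f^{-1}[t,\infty)=\varnothing\}$, which is also Vietoris open because $\phi_f^{-1}[t,\infty)$ is closed. Together these show that $A\mapsto\max_A\phi_f$ is both lower and upper semicontinuous, hence continuous, and continuity of $\Conv$ follows.

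\emph{Equivariance.} For $g\in\Gamma$, the map $\nu\mapsto g\nu$ is an affine weak$^*$ homeomorphism of $\prob(X)$. It therefore maps convex sets to convex sets and commutes with closure, so $g\,\overline{\Conv}(A)=\overline{\Conv}(gA)$.

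The only point that needs any care is the reduction of $h_f$ on the hull to a maximum over $A$ itself. That reduction depends on $\phi_f$ being affine and continuous, which is exactly why the functionals $h_f$ were chosen to define the topology.
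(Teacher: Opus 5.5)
Your proposal is correct and follows the paper's proof: both reduce continuity to that of $A\mapsto h_f(\Conv(A))=\max_{\nu\in A}\int_X f\,d\nu$ and then check this in the Vietoris topology (you verify it explicitly with the two subbasic sets, where the paper simply calls it straightforward). The only difference is cosmetic: you justify the identity by noting that a continuous affine functional has the same supremum on a set, its convex hull, and the closure of that hull, whereas the paper appeals to extreme values being attained at extreme points; your route is, if anything, more direct.
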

\begin{proof}
Indeed, since every continuous affine function takes its extreme values on extreme points, for every
$A\in\cK(\prob(X))\setminus\{\varnothing\}$ and $f\in C(X,\bR)$, $$h_f\bigl(\overline{\Conv}^{\,w^*}A\bigr)
 =\max_{\nu\in A}\nu(f).$$
It is straightforward to see that the map $A\mapsto \max_{\nu\in A}\nu(f)$ is continuous on $\cK(\prob(X))\setminus\{\varnothing\}$ for every $f\in C(X,\bR)$. This implies the continuity of the map ${\rm Conv}:\cK(\prob(X))\setminus\{\varnothing\} \to \CC(\prob(X))$.
The last assertion about equivariance is straightforward.
\end{proof}

\section{Proof of the Main result}
In preparation for the proof of the main result, we begin by proving the measurability of certain naturally occurring maps of independent interest.

Let $X$ be a compact metric $\G$-space.
For $H\le \Gamma$, we denote by $\prob_H(X)\subseteq \prob(X)$ the closed convex subset of $H$-invariant measures, which is non-empty if $H$ is amenable.
For $E\subset \Sub(\Gamma)$ we denote $\prob_E(X):=\bigcup_{H\in E} \prob_H(X)$. Observe that if $E$ is a closed subset of $\Sub(\Gamma)$, then $\prob_E(X)$ is a closed subset of $\prob(X)$.
By the universal property recalled in the previous section, $\prob_{\cA_\Gamma}(X)\subseteq\prob_E(X)$ for every nonempty closed $\Gamma$-invariant subset $E\subseteq\Sub^{\mathrm{am}}(\Gamma)$, since every $K$-invariant measure is $J$-invariant whenever $J\le K$.

\begin{lemma}\label{lem:union-measurable}
Let $\Gamma$ be a countable discrete group and let $X$ be a compact
metrizable $\Gamma$-space. The map
$Q:\cK(\Sub^{\mathrm{am}}(\Gamma))\xrightarrow[]{}\cK(\prob(X))$ defined by $Q(E)=\prob_E(X)$
is $\Gamma$-equivariant and Borel measurable.
\end{lemma}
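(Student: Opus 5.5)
Two things have to be checked: that $Q(E)$ is a compact subset of $\prob(X)$ and that $Q$ is $\Gamma$-equivariant, and then that $Q$ is Borel. The first is the earlier observation that $\prob_E(X)$ is closed when $E$ is closed, which I would verify directly. The set
\[
R=\{(H,\nu)\in \Sub(\Gamma)\times\prob(X): \nu \text{ is } H\text{-invariant}\}
\]
is closed, because $\nu$ is $H$-invariant if and only if $\int f\,d(s\nu)=\int f\,d\nu$ for all $f\in C(X)$ and all $s$ with $\chi_H(s)=1$. Since $\chi_H(s)$ depends continuously on $H$, limits of pairs in $R$ stay in $R$. Hence $\prob_E(X)=\pi_2\bigl(R\cap(E\times\prob(X))\bigr)$ is the projection of a compact set, so it is compact. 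Equivariance follows from $\prob_{gHg^{-1}}(X)=g\,\prob_H(X)$, which gives $Q(gEg^{-1})=g\,Q(E)$.

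For measurability I would use the criterion recorded in the preliminaries: it suffices to show that, for every closed $C\subseteq\prob(X)$, the set
\[
\{E\in\cK(\Sub^{\mathrm{am}}(\Gamma)) : \prob_E(X)\cap C\neq\varnothing\}
\]
is Borel. In fact I expect it to be closed, so that $Q$ is upper semicontinuous. We have $\prob_E(X)\cap C\ne\varnothing$ if and only if $E\cap D_C\ne\varnothing$, where
\[
D_C:=\{H\in\Sub^{\mathrm{am}}(\Gamma): \prob_H(X)\cap C\neq\varnothing\}=\pi_1\bigl(R\cap(\Sub^{\mathrm{am}}(\Gamma)\times C)\bigr).
\]
This is the projection of a compact set, so $D_C$ is closed in $\Sub^{\mathrm{am}}(\Gamma)$. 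The set $\{E: E\cap D_C\ne\varnothing\}$ is then the complement of a basic open set of the Vietoris topology, so it is closed and in particular Borel. This yields Borel measurability.

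The only point needing care is the closedness of $R$; everything else is a compactness-and-projection argument. I do not expect a genuine obstacle. I would only remark that $Q$ need not be continuous (lower semicontinuity may fail), which is why the lemma asserts only Borel measurability.
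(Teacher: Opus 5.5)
Your proof is correct and is essentially the paper's argument: both establish Borel measurability by showing that $\{E : Q(E)\cap D\neq\varnothing\}$ is closed for every closed $D\subseteq\prob(X)$, via the same compactness argument. You package the paper's subsequence step as closedness of the incidence relation $R$ and of its projection $D_C$, and you also verify directly that $Q(E)$ is compact, which the paper only states as an observation.
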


\begin{proof}
For $H\le \Gamma$, we have $\prob_{gHg^{-1}}(X)=\{g\nu: \nu\in \prob_{H}(X)\}$, which in turn implies that $Q$ is $\Gamma$-equivariant.
To see it is Borel measurable, as remarked in the previous section, it is enough to show that the set $\{E\in \cK(\Sub^{\mathrm{am}}(\Gamma)):Q(E)\cap D\ne\varnothing\}$
is closed for every closed set  $D\subseteq\prob(X)$. For this,
let $E_n\to E$ in $\cK(\Sub^{\mathrm{am}}(\Gamma))$, and let $H_n\in E_n$ and $\nu_n\in D\cap\prob_{H_n}(X)$. By passing to a subsequence, we may assume $H_n\to H\in E$ and $\nu_n\to\nu\in D$. This gives $\nu\in\prob_H(X)$, and the claim follows.
Hence, $Q$ is Borel measurable.
\end{proof}
We now turn to the Furstenberg URS map. We first describe a general
construction for compact partially (pre)ordered spaces.

We say a partial preorder $\preccurlyeq$ on a topological space $Y$ is \emph{closed} if the set $\{(y, z) : y\preccurlyeq z\}$ is closed in $Y\times Y$.
In this case, for every $z\in Y$, the set $Y_{\preccurlyeq z}:=\{y\in Y: y\preccurlyeq z\}$ is closed.

For $A\in\cK(Y)$, the set $\Min(A, \preccurlyeq):=\{a\in A : a\preccurlyeq b ~~\forall b\in A\}=A\cap\bigcap_{b\in A}Y_{\preccurlyeq b}$ is closed.
Then, the set $\Max(A, \preccurlyeq):=\{a\in A : b\preccurlyeq a ~~\forall b\in A\} = \Min(A, \preccurlyeq^{op})$ is also closed, where $a\preccurlyeq^{op} b \iff b\preccurlyeq a$ is the opposite preorder on $Y$.

\begin{lemma}\label{lem:ordered-selection}
Let $Y$ be a compact metrizable space equipped with a closed preorder
{$\preccurlyeq$}.
The maps $\Min(\cdot, \preccurlyeq)$ and $\Max(\cdot, \preccurlyeq)$ on $\cK(Y)$ are Borel.
\end{lemma}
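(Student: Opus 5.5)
The plan is to show that $\Min(\cdot,\preccurlyeq)\colon\cK(Y)\to\cK(Y)$ is upper semicontinuous, and then read off Borel measurability from the criterion recalled in the preliminaries. Since $\Min(A,\preccurlyeq)$ is closed in the compact set $A$, it lies in $\cK(Y)$, possibly as the empty set. So the map is well defined. By the remark after the definition of the Vietoris topology, it suffices to show that for every closed $C\subseteq Y$ the set
$$\mathcal{S}_C:=\{A\in\cK(Y):\Min(A,\preccurlyeq)\cap C\neq\varnothing\}$$
is Borel. I will in fact show it is closed.

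Since $\cK(Y)$ is metrizable, it is enough to argue with sequences. Let $A_n\in\mathcal{S}_C$ with $A_n\to A$ in $\cK(Y)$, and choose $a_n\in\Min(A_n,\preccurlyeq)\cap C$. By compactness of $Y$ we may pass to a subsequence with $a_n\to a$. Then $a\in C$, since $C$ is closed. Also $a\in A$: Vietoris convergence $A_n\to A$ means convergence in the Hausdorff metric (when $A\neq\varnothing$; note that $A_n\neq\varnothing$ forces $A\neq\varnothing$, because $\varnothing$ is isolated), and limits of points $a_n\in A_n$ lie in $A$.

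It remains to check minimality of $a$. Fix $b\in A$. Hausdorff convergence gives points $b_n\in A_n$ with $b_n\to b$. Minimality of $a_n$ in $A_n$ gives $a_n\preccurlyeq b_n$ for all $n$. Since $\{(y,z):y\preccurlyeq z\}$ is closed in $Y\times Y$ and $(a_n,b_n)\to(a,b)$, we get $a\preccurlyeq b$. As $b\in A$ was arbitrary, $a\in\Min(A,\preccurlyeq)\cap C$, so $A\in\mathcal{S}_C$. Hence $\mathcal{S}_C$ is closed, and $\Min(\cdot,\preccurlyeq)$ is Borel.

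For $\Max$, I will use $\Max(A,\preccurlyeq)=\Min(A,\preccurlyeq^{op})$. The relation $\preccurlyeq^{op}$ is again a closed preorder, because its graph is the image of the graph of $\preccurlyeq$ under the flip homeomorphism $(y,z)\mapsto(z,y)$ of $Y\times Y$. Applying the first part to $\preccurlyeq^{op}$ then finishes the argument.

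The only step that needs care is the two facts about Vietoris convergence used above: limit points of $a_n\in A_n$ lie in $A$, and every point of $A$ is a limit of points $b_n\in A_n$. Both follow from the identification of the Vietoris topology with the Hausdorff metric topology on nonempty compact sets (see \cite[Section 4.F]{Kechris}). They can also be checked directly from the subbasic open sets $\{K:K\cap V\neq\varnothing\}$ and $\{K:K\cap C=\varnothing\}$.
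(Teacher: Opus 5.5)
Your proof is correct and matches the paper's argument essentially step for step: you show that $\{A\in\cK(Y):\Min(A,\preccurlyeq)\cap C\neq\varnothing\}$ is closed for each closed $C$. This uses a convergent subsequence of minimal points, approximating sequences $b_n\in A_n$, and closedness of the graph of $\preccurlyeq$. The $\Max$ case then follows from $\preccurlyeq^{op}$.
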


\begin{proof}
Let $C\in \cK(Y)$, and let $(A_n)$ be a sequence in $\cK(Y)$ such that $\Min(A_n, \preccurlyeq) \cap C\ne \varnothing$ and $A_n\to A\in \cK(Y)$. Choose a sequence $y_n\in \Min(A_n, \preccurlyeq) \cap C$, and by passing to a subsequence, assume $y_n\to y\in Y$. Since $C$ is closed, $y\in C$, and since $A_n\to A$, we have $y\in A$. Let $a\in A$. Then there exists $a_n\in A_n$ such that $a_n\to a$. Since $y_n \preccurlyeq a_n$, closedness of $\preccurlyeq$ implies $y\preccurlyeq a$. Thus, $y\in \Min(A, \preccurlyeq) \cap C$. This implies that the map $A\mapsto \Min(A, \preccurlyeq)$ on $\cK(Y)$ is Borel measurable. Considering  $\preccurlyeq^{op}$, we also conclude that the map $A\mapsto \Max(A, \preccurlyeq)$ is Borel.
\end{proof}

{Now, we define the partial preorder $\sqsubseteq$ on the space $\cK(\Sub^{\mathrm{am}}(\G))$ by declaring $E\sqsubseteq E'$}
if every $K\in E'$ contains some $J\in E$.
{To see $\sqsubseteq$ is closed, let}
$E_n\to E$, $E'_n\to E'$, and $E_n\sqsubseteq E'_n$. Given
$K\in E'$, choose $K_n\in E'_n$ with $K_n\to K$, and then
$J_n\in E_n$ with $J_n\leq K_n$. Passing to a subsequence, assume
$J_n\to J\in E$. Then $J\leq K$, and we conclude $E\sqsubseteq E'$.
{Recall from the last section that for every $H\le \G$ and every $E\in \cK^H(\Sub^{\mathrm{am}}(H))$, we have $E\sqsubseteq\cA_H$. Thus, $\cA_H$ is a greatest element of $\cK^H(\Sub^{\mathrm{am}}(H))$ for $\sqsubseteq$}, and in fact, using \cite[Proposition~2.14 and Corollary~2.15]{LBMB}, we see
\[
\Max\big(\cK^H(\Sub^{\mathrm{am}}(H)), \sqsubseteq\big) ~\subseteq~ \big\{E\in \cK^H(\Sub^{\mathrm{am}}(H)) : \cA_H\subseteq E\big\}.
\]

{Note that the usual inclusion also defines a closed partial order on $\cK(\Sub^{\mathrm{am}}(\G))$.}

\Needspace{7\baselineskip}
\begin{lemma}\label{lem:Ami}
The map ${\rm Ami}: \Sub(\G)\to \cK(\cK(\Sub^{\mathrm{am}}(\G)))$ defined by $H\mapsto \cK^H(\Sub^{\mathrm{am}}(H))$ is Borel.
\end{lemma}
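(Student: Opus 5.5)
We will show that the graph-type sets defining ${\rm Ami}$ are closed, so that ${\rm Ami}$ is in fact upper semicontinuous in the Vietoris sense. By the remark in Section~\ref{Sec:preliminaries}, measurability then reduces to checking that, for every closed $\mathcal{D}\subseteq\cK(\Sub^{\mathrm{am}}(\G))$, the set
\[
S_{\mathcal{D}}:=\{H\in\Sub(\G): \cK^H(\Sub^{\mathrm{am}}(H))\cap\mathcal{D}\neq\varnothing\}
\]
is Borel. We intend to prove that $S_{\mathcal{D}}$ is in fact closed.

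First we verify that ${\rm Ami}(H)$ is a compact subset of $\cK(\Sub^{\mathrm{am}}(\G))$, so that the map is well defined. The space $\Sub^{\mathrm{am}}(H)=\Sub(H)\cap\Sub^{\mathrm{am}}(\G)$ is closed in $\Sub(\G)$. Consider the set of nonempty closed $E\subseteq \Sub^{\mathrm{am}}(\G)$ with $E\subseteq\Sub(H)$ and $hEh^{-1}=E$ for all $h\in H$. Each of these conditions is closed in $\cK(\Sub^{\mathrm{am}}(\G))$:
\begin{itemize}
\item nonemptiness is closed because $\varnothing$ is isolated;
\item the condition $E\cap(\Sub(\G)\setminus\Sub(H))=\varnothing$ is closed, since the complement of $\{E: E\cap V\neq\varnothing\}$ is closed whenever $V$ is open;
\item invariance under each fixed $h$ is closed, since conjugation by $h$ acts continuously on $\cK$.
\end{itemize}
Hence ${\rm Ami}(H)$ is closed in a compact metrizable space.

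The main step is the closedness of $S_{\mathcal{D}}$. Let $H_n\to H$ in $\Sub(\G)$ and $E_n\in{\rm Ami}(H_n)\cap\mathcal{D}$. After passing to a subsequence we may assume $E_n\to E$ in $\cK(\Sub^{\mathrm{am}}(\G))$, and then $E\in\mathcal{D}$. Three properties need to be checked.
\begin{itemize}
\item $E$ is nonempty, since $\varnothing$ is isolated and each $E_n$ is nonempty.
\item $E\subseteq \Sub(H)$. Each $J\in E$ is a limit of some $J_n\in E_n$ with $J_n\le H_n$. For $g\in J$ we have $g\in J_n\subseteq H_n$ eventually, and therefore $g\in H$.
\item $E$ is $H$-invariant. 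This is the only delicate point. Given $h\in H$, we have $h\in H_n$ for all large $n$, so $hE_nh^{-1}=E_n$ for large $n$. Continuity of conjugation by the fixed element $h$ on $\cK(\Sub(\G))$ then gives $hEh^{-1}=E$.
\end{itemize}
Here countability of $\G$ and discreteness are used implicitly, because Chabauty convergence is pointwise convergence of characteristic functions. Consequently $E\in {\rm Ami}(H)\cap\mathcal{D}$, which shows that $S_{\mathcal{D}}$ is closed.

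I do not expect a real obstacle here. The only thing to keep track of is that membership of $h$ in $H_n$ is eventual rather than uniform, and this is handled by fixing $h$ before letting $n\to\infty$. We conclude that ${\rm Ami}$ is Borel, indeed upper semicontinuous.
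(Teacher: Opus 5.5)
Your proof is correct and follows essentially the same route as the paper: you show that $\{H : {\rm Ami}(H)\cap\mathcal{D}\neq\varnothing\}$ is closed by passing to a convergent subsequence $E_n\to E$ and checking that $E$ is nonempty, that $E\subseteq\Sub(H)$, and that $E$ is $H$-invariant, exactly as the paper does. Your preliminary check that ${\rm Ami}(H)$ is compact, so that the map is well defined, is a harmless addition that the paper leaves implicit.
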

\begin{proof}
Let $C\in\cK(\cK(\Sub^{\mathrm{am}}(\G)))$, and let
$H_n\to H$ in $\Sub(\G)$ such that
$\cK^{H_n}(\Sub^{\mathrm{am}}(H_n))\cap C\ne\varnothing$.
Choose $E_n\in\cK^{H_n}(\Sub^{\mathrm{am}}(H_n))\cap C$.
By passing to a subsequence, assume $E_n\to E\in C$. Let $L\in E$, and choose $L_n\in E_n$ such that $L_n\to L$.
Since $L_n\le H_n$, we have $L\le H$. Thus
$E\subseteq\Sub^{\mathrm{am}}(H)$.
Moreover, for $h\in H$, we have $h\in H_n$ eventually, so
$hE_nh^{-1}=E_n$ eventually, and hence $hEh^{-1}=E$.
Since the empty set is isolated in the hyperspace, $E$ is nonempty.
Therefore, $E\in\cK^H(\Sub^{\mathrm{am}}(H))\cap C$ and the claim follows.
\end{proof}
\begin{prop}\label{prop:AH-borel}
For every countable discrete group $\Gamma$, the map
$\mathcal F_{\mathrm{URS}}:\Sub(\Gamma)\to
\cK(\Sub^{\mathrm{am}}(\Gamma))$, defined by
$\mathcal F_{\mathrm{URS}}(H)=\cA_H$, is $\Gamma$-equivariant and Borel measurable.
\end{prop}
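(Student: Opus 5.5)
The plan is to write $\mathcal F_{\mathrm{URS}}$ as a composition of maps already shown to be Borel. Put $Y:=\cK(\Sub^{\mathrm{am}}(\Gamma))$. This is a compact metrizable space, since $\Sub^{\mathrm{am}}(\Gamma)$ is closed in $\Sub(\Gamma)$. The candidate formula is
\[
\{\cA_H\} \;=\; \Min\Bigl(\Max\bigl({\rm Ami}(H),\sqsubseteq\bigr),\subseteq\Bigr),
\]
followed by the map that sends a singleton $\{E\}$ to $E$. Equivariance is immediate from $\cA_{gHg^{-1}}=g\cA_Hg^{-1}$, recalled in Section~\ref{Sec:preliminaries}, so the real work is measurability.

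\textbf{Borel steps.} By Lemma~\ref{lem:Ami}, $H\mapsto {\rm Ami}(H)=\cK^H(\Sub^{\mathrm{am}}(H))\in\cK(Y)$ is Borel. The preorder $\sqsubseteq$ on $Y$ was shown above to be closed. Lemma~\ref{lem:ordered-selection}, applied to $(Y,\sqsubseteq)$, therefore makes $A\mapsto\Max(A,\sqsubseteq)$ a Borel map $\cK(Y)\to\cK(Y)$. Inclusion is also a closed partial order on $Y$. Applying Lemma~\ref{lem:ordered-selection} again, $A\mapsto \Min(A,\subseteq)$ is Borel on $\cK(Y)$. The composition of these three maps is Borel.

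\textbf{Identifying the output.} I need to check that the composition equals $\{\cA_H\}$. Since $\cA_H$ is a greatest element of ${\rm Ami}(H)$ for $\sqsubseteq$, every $E\in{\rm Ami}(H)$ satisfies $E\sqsubseteq\cA_H$, so $\cA_H\in\Max({\rm Ami}(H),\sqsubseteq)$. The displayed inclusion before Lemma~\ref{lem:Ami}, which relies on \cite[Proposition~2.14 and Corollary~2.15]{LBMB}, says that every $E$ in this Max set contains $\cA_H$. Hence $\cA_H$ is the least element of the Max set for inclusion. Inclusion is antisymmetric, so the least element is the unique minimal one, and therefore $\Min(\Max({\rm Ami}(H),\sqsubseteq),\subseteq)=\{\cA_H\}$.

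\textbf{Extracting the element.} Finally, the set $S_1=\{\{E\}:E\in Y\}$ of singletons is closed in $\cK(Y)$, and $\{E\}\mapsto E$ is a homeomorphism $S_1\to Y$; it is the inverse of the continuous injection $E\mapsto\{E\}$ from a compact space. Composing gives that $H\mapsto\cA_H$ is Borel. I expect the main point needing care to be the identification step. There the whole argument depends on the cited \cite{LBMB} fact that every $\sqsubseteq$-maximal element contains $\cA_H$. Without it, $\Max$ could consist of several $\sqsubseteq$-equivalent sets, and $\Min$ for inclusion would not single out $\cA_H$. Everything else is a routine composition of the preceding lemmas.
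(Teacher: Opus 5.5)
Your proof is correct and follows the paper's argument: the paper also writes $\mathcal F_{\mathrm{URS}}(\cdot)=\Min\bigl(\Max({\rm Ami}(\cdot),\sqsubseteq),\subseteq\bigr)$ and concludes by Lemma~\ref{lem:ordered-selection} and Lemma~\ref{lem:Ami}, with equivariance from $\cA_{gHg^{-1}}=g\cA_Hg^{-1}$. Two of your steps make explicit what the paper leaves implicit: identifying this set as the singleton $\{\cA_H\}$ via the cited \cite{LBMB} inclusion, and passing back to $\cA_H$ through the homeomorphism $\{E\}\mapsto E$ on the closed set of singletons.
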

\begin{proof}
We have
\[
\mathcal F_{\mathrm{URS}}(\cdot) = \Min\Big(\Max\big({\rm Ami}(\cdot),\sqsubseteq\big), \subseteq\Big)
\]
and therefore it is Borel by Lemma~\ref{lem:ordered-selection} and Lemma~\ref{lem:Ami}.

The equivariance of the map follows from the fact that $g\cA_Hg^{-1}=\cA_{gHg^{-1}}$ for every $g\in \G$ and $H\le \G$.
\end{proof}
Combining all the above measurability results, we finish the proof of the main theorem.
\begin{proof}[Proof of Theorem~\ref{thm:maintheorem}]
Let $\Gamma$ be a countable discrete $C^*$-simple group and let $\mu$
be an IRS of $\Gamma$. Its action on $\partial_F\Gamma$ is free
by \cite{KK}. Using Lemma~\ref{lem:freebndry}, we obtain a free compact
metrizable $\Gamma$-boundary $X$.

By Lemma~\ref{lem:convmeasurable}, Lemma~\ref{lem:union-measurable}, and Proposition~\ref{prop:AH-borel}, the map $\Phi = {\rm Conv}\circ Q \circ \mathcal{F}_{\mathrm{URS}}: \Sub(\G)\to \CC(\prob(X))$ is $\G$-equivariant and Borel measurable.
Consequently, $\Phi_*\mu$ is a $\Gamma$-invariant Borel
probability measure on $\CC(\prob(X))$. Since $X$ is a boundary,
$\prob(X)$ is an irreducible affine $\Gamma$-space. Hence it follows from
\cite[Lemma~2.3]{BDL} that $\Phi_*\mu=\delta_{\prob(X)}$.
Therefore, there is a measurable $\mu$-conull set $\Omega\subseteq\Sub(\Gamma)$ such that $\overline{\Conv}^{\,w^*}\prob_{\cA_H}(X)=\prob(X)$ for every $H\in\Omega$.
Fix $H\in\Omega$ and $x\in X$.
By Milman's converse to the Krein--Milman theorem, $\delta_x\in\prob_{\cA_H}(X)$.
But since the action of $\G$ on $X$ is free, it follows that $\{e\}\in\cA_H$, and so $\cA_H=\{\{e\}\}$.
Hence, $H$ is $C^*$-simple for $\mu$-almost every $H\in \Sub(\G)$.
\end{proof}
\section{Acknowledgements}
\noindent
The second author would like to thank Yair Hartman for many valuable discussions around this problem.
\\
This work was completed when the second named author was visiting the Institute of Mathematics of the Polish Academy of Sciences, Warsaw. We thank the Institute for its hospitality.
\\
For the purpose of open access, the authors have applied a CC-BY license to any author accepted manuscript arising from this submission.

\bibliographystyle{amsalpha}
\bibliography{name}
\end{document}